\renewcommand{\leq}{\leqslant}
\renewcommand{\geq}{\geqslant}
\DeclareMathOperator{\lcm}{lcm}				
\newtheorem{theorem}{Theorem}[section]
\newtheorem{lemma}[theorem]{Lemma}
\theoremstyle{definition}
\numberwithin{equation}{section}
\newcommand{\N}{\mathbb{N}}
\newcommand{\Z}{\mathbb{Z}}
\newcommand{\R}{\mathbb{R}}
\newcommand{\C}{\mathbb{C}}
\newcommand{\e}{{\rm e}}
\newcommand{\ga}{{\mathfrak a}}
\newcommand{\gb}{{\mathfrak b}}
\newcommand{\gc}{{\mathfrak c}}
\newcommand{\gh}{{\mathfrak h}}
\newcommand{\eC}{{\EuScript C}}
\newcommand{\eD}{{\EuScript D}}
\renewcommand{\Re}{\operatorname{Re}}
\begin{document}

\title{The saddle-point method \goodbreak for general partition functions (*)}
\date{\today, \currenttime}
\author[G.~Debruyne]{Gregory Debruyne}
\thanks{(*) Some corrections with respect to the published version are included here.}
\thanks{G.~Debruyne acknowledges support by Postdoctoral Research Fellowships of the Research Foundation--Flanders and the Belgian American Educational Foundation. The latter one allowed him to do part of this research at the University of Illinois at Urbana-Champaign.} 
\address{Department of Mathematics: Analysis, Logic and Discrete Mathematics\\ \goodbreak Ghent University\\ Krijgslaan 281\\ 9000 Ghent\\ Belgium}
\email{gregory.debruyne@ugent.be}

\author[G.~Tenenbaum]{G\'erald Tenenbaum}
\address{Institut \'Elie Cartan \\ Universit\'e de Lorraine \\ BP 70239 \\ \goodbreak 54506 Vand\oe uvre-l\`es-Nancy Cedex \\ France}
\email{gerald.tenenbaum@univ-lorraine.fr}

\subjclass[2020]{Primary 05A17, 11N37, 11P82; Secondary 11P83.}
\keywords{partitions; saddle-point method; asymptotic formula; abstract partitions}

\begin{abstract}
We apply the saddle-point method to derive asymptotic estimates or asymptotic series for the number of partitions of a natural integer into parts chosen from a subset of the positive integers whose associated Dirichlet series  satisfies certain analytic properties. This enables grouping in a single statement many cases studied in the literature, as well as a number of new ones.
\end{abstract}

\maketitle

\section{Introduction}

Given a subset $\Lambda$ of the set $\N^*$ of positive integers,\footnote{Throughout this paper, we let $\N$ denote the set of non-negative integers and write $\N^*:=\N\smallsetminus\{0\}$.} we define, for each integer $n\geqslant 1$,  the number $p_\Lambda(n)$  of partitions of $n$ all of whose summands belong to $\Lambda$. In this work, we investigate how the saddle-point method may be employed to derive asymptotic information on $p_\Lambda(n)$ from analytic properties of the associated Dirichlet series 
\begin{equation} L_{\Lambda}(z) := \sum_{m \in \Lambda} m^{-z},
\end{equation} 
initially defined on the half-plane $\Re z > \sigma_c(\Lambda)$, where $\sigma_c(\Lambda)>0$ is the abscissa of convergence.
\par 
Asymptotics for $p_\Lambda(n)$ through the saddle-point method have already been achieved in \cite{Richmond-a, richmond}, which \cite{coons-kirsten} elaborated upon. These papers actually consider moments of partitions, which is more general than the present study. On the other hand, we aim here at full asymptotic expansions rather than just obtaining main terms. Our restrictions on the set $\Lambda$ are also slightly weaker inasmuch we do not require that $1 \in \Lambda$.
\par 
We note right away that, as $\sigma_c(\Lambda)>0$, $\Lambda$ is infinite, and that it follows from the Phragmén--Landau theorem (see, e.g., \cite[th. II.1.9]{GT15}) that $\sigma_c(\Lambda)$ is a singularity of $L_\Lambda$ and so  this series cannot be continued as an entire function.
\par 
For $\Lambda \subseteq \mathbb{N}^*$, we define the \emph{greatest common divisor} of $\Lambda$, and write $\gcd(\Lambda)$, as the greatest natural number $q$ such that $\Lambda \subseteq q\mathbb{N}^* = \{q, 2q, 3q ,\dots \}$.\par 
Given $A\in \R$, we define the {\it class} $\eC(A)$ comprising those subsets  $\Lambda$ fulfilling the following conditions:
\par 
{
\leftskip17mm\rightskip7mm\parindent-7mm
 (a)  $\gcd(\Lambda)=1$;
\par 
 (b) $L_\Lambda$ may be meromorphically continued to the closed half-plane $\Re z \geqslant -\varepsilon$ for suitable $\varepsilon>0$; \par 
(c) this continuation presents a unique simple pole at $z = \sigma_c(\Lambda)$ with residue $A$; 
\par 
(d) we have $|L_\Lambda(-\varepsilon + it)| \ll \e^{a|t|}$ $(t\in\R)$ for some $a < \pi/2$ and $|L_\Lambda(\sigma+ it)| \ll_\delta \exp(\e^{\delta|t|})$ for any $\delta > 0$ where $-\varepsilon  \leq \sigma \leq 2$ and $|t| \geq 1$. \par }
\par
\leftskip=0mm\rightskip=0mm\parindent5mm
We furthermore define the {\it subclass} $\eD(A)$ of $\eC(A)$ comprising those subsets  $\Lambda$ satisfying the extra conditions:
\par 
{\leftskip17mm\rightskip7mm\parindent-7mm
(e) $L_\Lambda$ may be meromorphically continued to $\C$;\par 
(f) for suitable $R_N\to\infty$ and some {$a<\pi/2$}, we have $$|L_\Lambda(-R_{N} + it)| \ll \exp(a|t|)\quad(t\in\R,\,N\to\infty),$$
while $|L_\Lambda(\sigma+ it)| \ll_\delta \exp(\e^{\delta|t|})$ for any $\delta > 0$ on $-R_N \leq \sigma \leq 2$ and $|t| \geq 1$;
\par 

(g) for all $q \geqslant 2$ the set $\Lambda\smallsetminus q\N$ is infinite.
\par }
\begin{theorem} 
\label{thsgpimain} 
Let $A\in\R$ and $\Lambda \in\eC(A)$. Then
\begin{equation} \label{eqsgpirweak}
 p_{\Lambda}(n) \sim \gb\e^{\gc n^{\alpha/(\alpha + 1)}}/n^\gh\qquad (n\to\infty),
\end{equation}
where $\alpha:=\sigma_c(\Lambda)$, $\gh:=(1-L_\Lambda(0) + \alpha/2)/(\alpha + 1)$ and
\begin{align*}
 \ga & := \{A\Gamma(1+\alpha)\zeta(1+\alpha)\}^{1/(\alpha + 1)},\quad
 \gb  := \frac{\e^{L_\Lambda'(0)}\mathfrak{a}^{-L_\Lambda(0) + 1/2}}{\sqrt{2\pi(1+\alpha)}},\quad \gc  := \mathfrak{a}(1+1/\alpha).
\end{align*}
Moreover, if $\Lambda\in\eD(A)$, then there exist constants $\gamma_{j,h}$ $((j,h)\in\N^2)$ such that for each $N\geqslant 1$,
\begin{equation} \label{eqsgpirstrong}
p_{\Lambda}(n) = \frac{\gb\e^{\gc n^{\alpha/(\alpha + 1)}}}{n^\gh} \Bigg\{1 +\sum_{\substack{j+h\geqslant 1\\ j\alpha+h\leqslant N(\alpha+1)}} \frac{\gamma_{j,h}}{ n^{(j\alpha + h)/(\alpha + 1)}} + O\Big(\frac1{n^N}\Big)\Bigg\}.
\end{equation}
\end{theorem}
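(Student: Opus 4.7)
The strategy is the classical saddle-point method applied to the generating function $P_\Lambda(s) := \prod_{m\in\Lambda}(1-s^m)^{-1} = \sum_{n\geq 0} p_\Lambda(n) s^n$. By Cauchy's formula,
$$p_\Lambda(n) = \frac{1}{2\pi}\int_{-\pi}^{\pi} P_\Lambda(\e^{-\sigma-it})\, \e^{n(\sigma+it)}\,dt,$$
where $\sigma = \sigma(n) > 0$ is to be chosen as the real saddle. The first task is to obtain an asymptotic expansion of $\log P_\Lambda(\e^{-\tau})$ as $\tau \to 0^+$. Starting from $\log P_\Lambda(\e^{-\tau}) = \sum_{m\in\Lambda}\sum_{k\geq 1}\e^{-mk\tau}/k$ and the Mellin identity $\e^{-x} = \frac{1}{2\pi i}\int_{(c)} \Gamma(z) x^{-z}\,dz$, I obtain, for $c > \alpha$,
$$\log P_\Lambda(\e^{-\tau}) = \frac{1}{2\pi i}\int_{(c)} \Gamma(z)\zeta(z+1) L_\Lambda(z) \tau^{-z}\,dz.$$
Shifting the contour to $\Re z = -\varepsilon$ under hypotheses (b) and (d), together with the exponential decay of $\Gamma$ in vertical strips, and picking up the residues at $z=\alpha$ (simple pole of $L_\Lambda$ with residue $A$) and at $z=0$ (double pole of $\Gamma(z)\zeta(z+1)$), yields
$$\log P_\Lambda(\e^{-\tau}) = A\Gamma(\alpha)\zeta(\alpha+1)\tau^{-\alpha} - L_\Lambda(0)\log\tau + L_\Lambda'(0) + O(\tau^\varepsilon).$$
Under the stronger assumption $\Lambda \in \eD(A)$, iterating via (e)--(f) and pushing the contour to $\Re z = -R_N$ produces a complete asymptotic series in $\tau$: the remaining poles of $\Gamma$ at $z = -j$ ($j\geq 1$) contribute further terms $\tau^j$ with coefficients involving $L_\Lambda(-j)\zeta(1-j)$.

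The saddle-point equation $n + \frac{d}{d\tau}\log P_\Lambda(\e^{-\tau})\big|_{\tau=\sigma} = 0$, combined with the expansion above, fixes $\sigma = \ga\, n^{-1/(\alpha+1)}(1+o(1))$; a bootstrap against the full series yields arbitrarily sharp asymptotic values of $\sigma$. On the \emph{major arc} $|t| \leq \sigma^{1+\delta}$ for a suitable small $\delta > 0$, I would Taylor-expand $\Phi(\tau) := \log P_\Lambda(\e^{-\tau}) + n\tau$ around $\sigma$: the quadratic term $\tfrac12 \Phi''(\sigma) t^2$, of order $(\alpha+1)\,n\,t^2/\sigma$, supplies the Gaussian factor, while the higher Taylor coefficients, integrated termwise against the Gaussian, generate the constants $\gamma_{j,h}$ of \eqref{eqsgpirstrong}. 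A direct computation substituting the refined value of $\sigma$ into $\e^{\Phi(\sigma)}/\sqrt{2\pi\Phi''(\sigma)}$ recovers the prefactor $\gb/n^\gh$ and the exponent $\gc n^{\alpha/(\alpha+1)}$ exactly as stated; the two powers of $n$ in the sum over $(j,h)$ correspond respectively to the singularity expansion in $\tau$ (index $h$) and to the higher-order Gaussian corrections (index $j$).

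The crux of the argument is to bound the integrand on the \emph{minor arc} $\sigma^{1+\delta} \leq |t| \leq \pi$. One needs to show that
$$\log|P_\Lambda(\e^{-\sigma})| - \Re\log P_\Lambda(\e^{-\sigma-it}) = \sum_{m\in\Lambda}\sum_{k\geq 1} \frac{1-\cos(mkt)}{k}\e^{-mk\sigma}$$
exceeds any prescribed positive power of $n$, uniformly on this range. Here hypothesis (a) is essential: were $\Lambda \subset q\N^*$ for some $q\geq 2$, the right-hand side would vanish at $t=2\pi/q$. To convert (a) into a quantitative lower bound, I would again invoke Mellin inversion, this time against the kernel $1-\cos x$, and exploit (d) on the shifted contour; the argument then concludes by the familiar splitting depending on whether $t$ lies close to, or far from, a rational with small denominator. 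For the full expansion \eqref{eqsgpirstrong}, hypothesis (g) plays the analogous role uniformly in such denominators. This minor-arc estimate is the step I expect to be the most delicate, since its strength governs the admissible $\delta$, and thereby the quality of the error term in both \eqref{eqsgpirweak} and \eqref{eqsgpirstrong}.
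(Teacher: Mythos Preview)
Your outline matches the paper's proof almost exactly on every step except the minor-arc estimate: the Mellin representation of $\Phi$, the contour shift picking up residues at $z=\alpha$ and $z=0$ (and at negative integers under $\eD(A)$), the saddle-point equation and its iterated solution, and the Taylor expansion near $\varrho$ followed by termwise Gaussian integration all coincide with what the paper does in Sections~2 and~4. Your attribution of the indices $(j,h)$ to the Gaussian corrections and the $\tau$-expansion respectively is also the right reading.

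The one place your plan diverges, and is genuinely incomplete, is the minor arc $\varrho^{1+\delta}\leq|t|\leq\pi$. You propose to invoke Mellin inversion against the kernel $1-\cos x$ and then shift the contour using (d). This is unlikely to succeed as stated: the Mellin asymptotics for $\Phi(s)$ are asymptotics as $|s|\to0$, whereas for $|t|$ of order~$1$ (in particular near $t=2\pi a/q$) the point $\varrho+it$ is bounded away from~$0$ and the contour-shift expansion gives no useful control on $\Re\bigl(\Phi(\varrho)-\Phi(\varrho+it)\bigr)$. The obstruction there is purely arithmetic, not analytic, and hypothesis (d) does not see it. The paper instead handles this range by an elementary product estimate (Lemma~3.1): it writes
\[
\frac{|F(\varrho+it)|}{F(\varrho)}=\prod_{m\in\Lambda}\Bigl(1+\frac{4\sin^2(mt/2)}{\e^{m\varrho}(1-\e^{-m\varrho})^2}\Bigr)^{-1/2},
\]
approximates $t/2\pi$ by a rational $a/q$ with bounded denominator via Dirichlet, and then on each arc keeps only those factors $m$ lying in a suitable dyadic range so that $\|mt/2\pi\|\gg1/q$. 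The required supply of such $m$ comes from the counting estimate $|\Lambda\cap[1,x]|\sim cx^\alpha$, itself a consequence of (b)--(c) via Wiener--Ikehara. On the major arcs $t\approx2\pi a/q$ one uses a single $m_q\in\Lambda\smallsetminus q\N$ (guaranteed by (a)) to extract one factor $\ll\varrho$, the total arc-length being $\ll\varrho$; under (g) one may take arbitrarily many such $m_q$, which is what upgrades the bound to $\ll_N\varrho^N$ and yields the full expansion. You should replace your Mellin idea for the minor arc by this direct argument.
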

\par 
Explicit values of the constants $\gamma_{j,h}$ can be obtained from the calculations in our proof. For instance, we have 
\begin{equation}
\label{g10+01}
 \begin{aligned}
 \gamma_{1,0} &= \frac{1}{2(1+\alpha)\ga}\left\{-L_\Lambda(0)^{2} + L_\Lambda(0)(\alpha + 1) + \tfrac14(\alpha + 3)(\alpha + 2) - \tfrac5{12}(\alpha + 2)^{2}\right\},\\
 \gamma_{0,1} &= \tfrac12\ga L_\Lambda(-1).
 \end{aligned}
\end{equation}

\section{The generating function}
For notational simplicity, in the sequel we drop the suffix $\Lambda$ from $p_\Lambda$ and $L_\Lambda$.
With the convention that $p(0)=1$, we set $F$ as the generating function of $p$,
\begin{equation}\label{eqsgpggen}
 F(s) = \sum_{n \in \N} p(n) \e^{-sn} = \prod_{m \in \Lambda} (1-\e^{-sm})^{-1}\qquad (\Re s > 0),
\end{equation}
so that  $p(n)$ may be recovered through Cauchy's formula
\begin{equation}\label{eqsgpgcauchy}
 p(n) = \frac{1}{2\pi i} \int^{\sigma + i\pi}_{\sigma - i\pi } \e^{sn}F(s)\mathrm{d}s = \frac{1}{2\pi} \int^{\pi}_{-\pi } \e^{\sigma n + itn}F(\sigma + it)\mathrm{d}t\qquad (n\geqslant 0,\, \sigma > 0).  
\end{equation}
\par  The principle of the saddle-point method consists in selecting $\sigma$ as the solution of the equation $-F'(\sigma)/F(\sigma) = n$, which we shall denote by $\varrho=\varrho_n$. We shall need some approximations to $\Phi:=\log F$ and its derivatives. We use the determination of $\Phi$ obtained by summing the principal branches of the complex logarithms of each term of the infinite product \eqref{eqsgpggen}. Thus, for $\Re s > 0$, we have
\begin{equation}
 \Phi(s) :=  \log F(s) = -\sum_{m\in \Lambda} \log(1-\e^{-sm}) = \sum_{m \in \Lambda} \sum_{k \geqslant  1} \frac{\e^{-m ks}}{k} = \sum_{n \geqslant  1} \frac{f(n)}{n} \e^{-sn},
\end{equation}
where 
\begin{equation}
 f(n) := \sum_{\substack{m \in \Lambda \\ m \mid n}} m\qquad (n\geqslant 1).
\end{equation}
Note that $-\Phi'(\sigma)$ is strictly decreasing from $\infty$ to $0$ on $(0,\infty)$, so $-\Phi'(\sigma) = n$ has indeed a unique solution. By the Mellin inversion formula for $\e^{-sm}$ and the convolution identity
\begin{equation}
 \sum_{n\geqslant 1} \frac{f(n)}{n^{z+1}} = \zeta(z+1) L(z)\qquad (\Re z > \alpha),
\end{equation}
where $\zeta$ stands for the Riemann zeta function, we obtain
\begin{equation}
 \Phi(s) = \frac{1}{2\pi i} \int^{2+i\infty}_{2-i\infty} \Gamma(z)\zeta(z+1)L(z) \frac{\mathrm{d}z}{s^{z}}\qquad ( \Re s > 0).
\end{equation}
Differentiating under the integral sign, one obtains, for natural $k$,
\begin{equation}
 \Phi^{(k)}(s) = \frac{(-1)^{k}}{2\pi i} \int^{2+i\infty}_{2-i\infty} \Gamma(z+k)\zeta(z+1)L(z) \frac{\mathrm{d}z}{s^{z+k}}\qquad ( \Re s > 0).
\end{equation}
We move the line of integration to $\Re z = -\varepsilon$, resp. (for $\eD(A)$) $\Re z = -R_{N}$. This is allowed since $\zeta$ has finite order in every vertical strip, $\Gamma$ has exponential decay $\ll_{\delta} \e^{-(\pi/2 - \delta)|y|}$ for any $\delta > 0$ on every vertical strip by the complex Stirling formula and $$L(z) - A/(z-\alpha) \ll \e^{a|y|}\quad( -\varepsilon \leqslant x \leqslant 2, {\text{ resp.}} -R_{N} \leqslant x \leqslant 2),$$ by hypothesis and the Phragmén--Lindel\"of principle. (By the same result  we may also assume that $R_{N}$ is not an integer.) Taking the residues into account, we obtain, for $k\geqslant 1$, as $\sigma \rightarrow 0+$, 
\begin{equation}\label{eqsgpgderw}
\begin{aligned} 
 \Phi(\sigma) & = \frac{A\Gamma(\alpha)\zeta(1+\alpha)}{\sigma^\alpha}  -L(0) \log \sigma + L'(0) + O(\sigma^{\varepsilon}),\\
(-1)^{k}\Phi^{(k)}(\sigma) & = \frac{A\Gamma(k+\alpha)\zeta(1+\alpha)}{\sigma^{\alpha+k}}  + \frac{\Gamma(k)L(0)}{\sigma^k} +  O\Big(\frac1{\sigma^{k-\varepsilon}}\Big),
\end{aligned}
\end{equation}
and, in the case $\Lambda\in\eD(A)$,
\begin{equation} \label{eqsgpgders}
\begin{aligned}
\Phi(\sigma) & = \frac{A\Gamma(\alpha)\zeta(1+\alpha)}{\sigma^\alpha}  -L(0) \log \sigma + L'(0) - \zeta(0)L(-1)\sigma\\
& \ \ \  + \sum_{1\leqslant j\leqslant R_{N}/2} \frac{\zeta(1-2j)L(-2j)}{(2j)!}\sigma^{2j} +  O(\sigma^{R_{N}}),\\
(-1)^{k}\Phi^{(k)}(\sigma) & = \frac{A\Gamma(k+\alpha)\zeta(1+\alpha)}{\sigma^{\alpha+k}} + \frac{\Gamma(k)L(0)}{\sigma^k} \\
& \ \ \   + \sum_{1 \leqslant  j < R_{N}+k-1} \frac{(-1)^{j+k-1}\zeta(2-k-j)L(1-k-j)}{(j+k-1)!}\sigma^{j-1} +  O\big(\sigma^{R_{N}-k}\big),
\end{aligned}
\end{equation}
where we have exploited the fact that $\zeta$ vanishes at negative even integers and therefore cancels the corresponding poles of $\Gamma$. We can now approximately solve the saddle-point equation $-\Phi'(\varrho) = n$ via, for example, the method of iteration or the classical approach resting on Rouché's theorem. We find that
\begin{equation}\label{eqsgpginvw}
 \varrho = \frac{\left\{A\Gamma(1+\alpha)\zeta(1+\alpha)\right\}^{1/(1+\alpha)}}{n^{1/(1+\alpha)}} +\frac{L(0)}{(1+\alpha)n} + O\Big(\frac1{n^{1 +\varepsilon/(1+\alpha)}}\Big),
\end{equation}
provided $\varepsilon < \alpha$. In the case $\Lambda\in\eD(A)$, we obtain the existence of constants $c_{j,h}$ such that
\begin{equation} 
\label{eqsgpginvs}
 \varrho = \sum_{(j,h) \in E} \frac{c_{j,h}}{n^{(j\alpha +h)/(1+\alpha)}}+ O\Big(\frac1{n^{1+R_{N}/(1+\alpha)}}\Big),
\end{equation}
where $E$ is, for sufficiently large $Y=Y_N$, the intersection of $[0,Y]^2$ with
\begin{equation}
\big\{(j,1):j\geqslant 0\big\}\cup\big\{(j,2):j\geqslant 1\big\}\cup\big\{(j,2h+u):u=1\text{ or }2,\,j\geqslant u,\,h\geqslant 1\big\}.
\end{equation}
Note that the values of the constants $c_{0,1}$ and $c_{1,1}$ are consistent with formula \eqref{eqsgpginvw}. We also have $$c_{2,1} = \alpha L(0)^{2}/(2(1+\alpha)^{2}\mathfrak{a}),\quad c_{1,2} = \zeta(0)L(-1)\mathfrak{a}/(1+\alpha)$$ thereby enabling us to establish \eqref{g10+01}. 
\par
With the above approximations for $\varrho$, $\Phi$ and its derivatives at hand, we can  proceed to estimate the integral \eqref{eqsgpgcauchy}. The first step consists in bounding the contribution of those $s$ that are sufficiently far from the saddle point $\varrho$.

\section{The contribution away from the saddle point} \label{ssgpc}
In order to deal with the singularities away from $s = \varrho$ in this general setting, we present an argument that differs from that of the corresponding Lemma 2.3 in \cite{t-w-l}. The proof given there\footnote{See the corrected version available on arXiv} relied on the arithmetic structure of the $k$-powers via the use of Weyl's inequality---and actually provides a sharper bound than the one we obtain below. In our general framework we wish to reduce the use of specific arithmetic structure as much as possible. Therefore, we shall only use the necessary arithmetic assumption that $\gcd(\Lambda) = 1$ and the regularity condition \eqref{eqsgpcsize} stated below. Note that \eqref{eqsgpcsize} can readily be deduced from the assumptions of Theorem \ref{thsgpimain} via for example the Wiener-Ikehara Tauberian theorem\footnote{Naturally, the assumptions of Theorem \ref{thsgpimain} provide a stronger error term than \eqref{eqsgpcsize}---see,e.g., \cite[th.II.7.13]{GT15}. The benefit of using \eqref{eqsgpcsize} lies in the fact that our argument may still go through even if the $L$-function admits some additional singularities. The Wiener-Ikehara Tauberian theorem already shows that only a continuous extension of $L(z)$ to $\{\Re z  \geqslant \alpha,\,z\neq\alpha\}$ suffices to conclude \eqref{eqsgpcsize}. Furthermore, even when $L(z) - A/(z-\alpha)$ cannot be continuously extended to the whole line $\Re z =\alpha$, but only to some interval $(\alpha - i\lambda,\alpha + i\lambda)$, one may still appeal to the finite form version of the Wiener-Ikehara theorem due to Graham and Vaaler---see,e.g., \cite[th. III.5.4]{korevaarbook}, to get a regularity condition that is sufficient. It only requires minor modifications to the proof of Lemma \ref{lemsgpcdio}.} with $c = A/\alpha$.\par 
Recalling the definition of $F$ in \eqref{eqsgpggen}, we have the following estimate.
\begin{lemma} \label{lemsgpcdio} Set $1 < \beta < 1 + \alpha/2$. Suppose that $\gcd(\Lambda) = 1$ and that, for a suitable constant $c> 0$, we have
\begin{equation} 
\label{eqsgpcsize}
\big|\Lambda\cap[1,x]\big| \sim cx^{\alpha}\qquad (x\to\infty).
\end{equation}
Then
\begin{equation} 
\label{eqsgpcint}
  \int_{\varrho^{\beta} \leqslant |t| \leqslant \pi } \e^{ itn}F(\varrho + it)\mathrm{d}t \ll \varrho^{2}F(\varrho) ,
\end{equation}
 If additionally $\Lambda\smallsetminus q\mathbb{N}$ is infinite for all $q \geqslant 2$, then the left-hand side of \eqref{eqsgpcint} is $\ll_N \varrho^{N}F(\varrho) $ for any fixed $N$.
\end{lemma}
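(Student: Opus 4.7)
My plan is to reduce the integral estimate to a pointwise logarithmic decrement. Writing $|F(\varrho+it)| = F(\varrho)\e^{-D(t)}$ with
\[
D(t) = \log F(\varrho) - \log|F(\varrho+it)| = \tfrac{1}{2}\sum_{m\in\Lambda}\log\bigl(1+X_m(t)\bigr),\qquad X_m(t):=\frac{4\e^{-\varrho m}\sin^2(mt/2)}{(1-\e^{-\varrho m})^2},
\]
the estimate \eqref{eqsgpcint} will follow from a uniform lower bound of the form $D(t)\geqslant 2|\log\varrho|-O(1)$ on $t\in[\varrho^\beta,\pi]$, for then the pointwise estimate $|F(\varrho+it)|\ll \varrho^2F(\varrho)$ integrates to $\ll\varrho^2F(\varrho)$ over a range of length $O(1)$.

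I would split $[\varrho^\beta,\pi]$ into a near part $[\varrho^\beta, c\varrho]$ and a far part $[c\varrho,\pi]$. On the near part, a second-order Taylor expansion of $\Phi$ around $\varrho$, together with the derivative asymptotics $\Phi^{(k)}(\varrho)\asymp \varrho^{-\alpha-k}$ from \eqref{eqsgpgderw}, furnishes $D(t)\gg t^2/\varrho^{\alpha+2}$. At $t=\varrho^\beta$ this reads $\varrho^{2\beta-\alpha-2}$, a quantity which grows faster than any power of $|\log\varrho|$ because $\beta<1+\alpha/2$ makes the exponent strictly negative; hence this part contributes super-polynomially small amounts and is comfortably absorbed.

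On the far part $[c\varrho,\pi]$, I would use the Diophantine consequence of $\gcd(\Lambda)=1$. Selecting $\lambda_1,\dots,\lambda_s\in\Lambda$ with $\gcd(\lambda_1,\dots,\lambda_s)=1$ and writing $1=\sum a_i\lambda_i$, one obtains $\|t/(2\pi)\|\leqslant A\max_i\|\lambda_it/(2\pi)\|$ with $A=\sum|a_i|$, forcing at least one $\lambda_i$ to satisfy $\sin^2(\lambda_it/2)\gg|t|^2$. The closed form
\[
\sum_{k\geqslant 1}\frac{\e^{-\varrho\lambda_i k}(1-\cos(\lambda_i k t))}{k}=\log\frac{|1-\e^{-\varrho\lambda_i+i\lambda_i t}|}{1-\e^{-\varrho\lambda_i}},
\]
together with $|1-\e^{-\varrho\lambda_i+i\lambda_i t}|^2=(1-\e^{-\varrho\lambda_i})^2+4\e^{-\varrho\lambda_i}\sin^2(\lambda_i t/2)$, then shows that each such active generator contributes a term of size $\log(|t|/\varrho)+O(1)$ to $D(t)$. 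Summing the contributions of a pair of generators, or combining one generator with a sum over moderately sized $m\in\Lambda$ controlled via \eqref{eqsgpcsize}, produces the required $D(t)\geqslant 2|\log\varrho|-O(1)$.

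The main obstacle is achieving the factor $\varrho^2$ rather than just $\varrho$, especially near resonances $t\approx 2\pi a/q$ with small $q\geqslant 2$, where elements of $\Lambda\cap q\mathbb{N}$ contribute nothing to $D(t)$. The bare assumption $\gcd(\Lambda)=1$ furnishes only one exceptional $m\in\Lambda\setminus q\mathbb{N}$, yielding a single factor of $|\log\varrho|$ in $D$; to reach $2|\log\varrho|$ one must either locate a second exceptional generator via a refined Diophantine argument, or exploit the narrow width (of order $\varrho$) of the resonant region together with the regularity \eqref{eqsgpcsize}, which supplies many $m\in\Lambda$ of intermediate size whose $\cos(mt)$-values cancel effectively off-resonance. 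Under the strengthened hypothesis that $\Lambda\setminus q\mathbb{N}$ is infinite for every $q\geqslant 2$, arbitrarily many exceptional elements are available at each resonance, and accumulating their contributions gives $D(t)\geqslant N|\log\varrho|-O(1)$ for any preset $N$, whence the strong bound $\ll_N\varrho^NF(\varrho)$.
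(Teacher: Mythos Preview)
Your stated strategy---prove a uniform pointwise bound $D(t)\geqslant 2|\log\varrho|-O(1)$ on all of $[\varrho^\beta,\pi]$---is unattainable under the bare hypothesis $\gcd(\Lambda)=1$, and this is a genuine gap. Consider $\Lambda=\{3\}\cup 2\N^*$, which has $\gcd(\Lambda)=1$ and satisfies \eqref{eqsgpcsize} with $\alpha=1$, $c=\tfrac12$. At the exact resonance $t=\pi$ (i.e.\ $a/q=1/2$), every even $m\in\Lambda$ has $\sin(mt/2)=0$ and hence $X_m(\pi)=0$; the sole surviving term is $m=3$, giving $D(\pi)=\tfrac12\log(1+X_3(\pi))=|\log\varrho|+O(1)$, so $|F(\varrho+i\pi)|\asymp\varrho F(\varrho)$ and not $\varrho^2F(\varrho)$. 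Your option of ``locating a second exceptional generator'' is thus impossible in general, and your phrasing of the ``narrow width'' option still reads as an attempt to boost the pointwise bound (``supplies many $m$\dots off-resonance''), which cannot help at the resonance itself.

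The paper's proof abandons the uniform pointwise goal. It performs a Dirichlet approximation $t/2\pi=a/q\pm r$ with $q\leqslant 3R_0$, and splits $[2\pi\varrho,\pi]$ into \emph{minor arcs} ($r>2\varrho/3q$), on which density \eqref{eqsgpcsize} furnishes many $m\in\Lambda$ in a suitable dyadic range with $\|mt/2\pi\|\geqslant 1/3q$, yielding the strong pointwise bound $|F(\varrho+it)|\leqslant \varrho^2 F(\varrho)$; and \emph{major arcs} ($r\leqslant 2\varrho/3q$, $q\geqslant 2$), on which only the weak pointwise bound $|F(\varrho+it)|\ll_q\varrho F(\varrho)$ from a single $m_q\in\Lambda\smallsetminus q\N$ is available, but whose total measure is $\ll\varrho$. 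The product of these two $\varrho$'s gives the required $\varrho^2$ for the integral. This trade-off---weaker pointwise bound times small measure on major arcs---is the missing ingredient in your sketch; once you accept it, the rest of your outline (the near-part Taylor estimate, and the accumulation of $N$ exceptional elements under condition~(g) for the $\varrho^N$ bound) is essentially correct.
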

\begin{proof} Our first goal consists in showing that the contribution of the range $\{t:2\pi\varrho< |t|\leqslant \pi\}$  to the integral in \eqref{eqsgpcint} is compatible with the required estimate. By symmetry, we may restrict to the interval $J:=[2\pi\varrho,\pi]$. To achieve this first objective, we shall prove that, under the stated assumptions, we have $|F(\varrho + it)| \ll \varrho^{2}F(\varrho)$ except for a set of measure $\ll \varrho$ on which we have at least $|F(\varrho + it)| \ll\varrho F(\varrho)$. \par 
We note right away that hypothesis \eqref{eqsgpcsize} implies, for suitably large $R_0$,
\begin{equation}
\label{dyadLam}
\big|\Lambda\cap]R,2R]\big|\gg R^\alpha\qquad (R>R_0).
\end{equation}
\par
By Dirichlet's approximation theorem every $t\in J$ may be represented in the form $t/2\pi=a/q  \pm  r$ with $(a,q) = 1$, $q \leqslant 3R_0$ and $0\leqslant r \leqslant 1/3qR_0$.\par 
We start by analyzing the contribution of the ``minor arcs" comprising those $t\in J$ such that $2\varrho/3q < r \leqslant 1/3qR_0$. We note that we do allow the fraction $a = 0$, $q = 1$ for the minor arcs, but not for the major arcs later on. Since
\begin{equation*}
 |1-\e^{-m(\varrho+it)}|^{2} = (1-\e^{-m\varrho})^{2} + 2\e^{-m\varrho}\{1-\cos(mt)\},
\end{equation*}
we have, defining classically $\|\vartheta\|$ as the distance from the real number $\vartheta$ to the set of integers,
\begin{align*}
 \frac{|F(\varrho + it)|}{F(\varrho)} & = \prod_{m \in \Lambda} \bigg(1+ \frac{4\sin^{2}(mt/2)}{\e^{m\varrho}(1-\e^{-m\varrho})^{2}}\bigg)^{-1/2} \leqslant \prod_{m \in \Lambda} \bigg(1+ \frac{16\left\| mt/2\pi\right\|^{2}}{\e^{m\varrho}(1-\e^{-m\varrho})^{2}}\bigg)^{-1/2}\\
& \leqslant \prod_{\substack{m \leqslant 1/\varrho\\ m \in \Lambda }} \bigg(1+ \frac{5\left\| mt/2\pi\right\|^{2}}{(1-\e^{-m\varrho})^{2}}\bigg)^{-1/2} \leqslant \prod_{\substack{1/3rq \leqslant m \leqslant 2/3rq \\ m \in \Lambda}} \bigg(1+ \frac{5\left\| mt/2\pi\right\|^{2}}{ m^{2}\varrho^{2}}\bigg)^{-1/2}.
\end{align*}
Now, for $1/3rq\leqslant m\leqslant 2/3rq$, we have $1/3q\leqslant |mt/2\pi-ma/q|\leqslant 2/3q$ and so $\|mt/2\pi\|\geqslant 1/3q$. Thus
\begin{equation*}
 \frac{|F(\varrho + it)|}{F(\varrho)} \leqslant \prod_{\substack{1/3rq \leqslant m \leqslant 2/3rq \\ m \in \Lambda}} \bigg(1+ \frac{5}{9 q^{2}m^{2}\varrho^{2}}\bigg)^{-1/2} \leqslant \prod_{\substack{1/3rq \leqslant m \leqslant 2/3rq \\ m \in \Lambda}} \bigg(1+ \frac{5r^{2}}{4\varrho^{2}}\bigg)^{-1/2}.
\end{equation*}
If $2\varrho/3q \leqslant r \leqslant \sqrt{\varrho}$, we get
$$\frac{|F(\varrho + it)|}{F(\varrho)}\leqslant \Big(1+\frac{5}{9q^2}\Big)^{-c_0/(rq)^\alpha}\ll \e^{-c_1/\varrho^{\alpha/2}}$$
where $c_0$ and $c_1$ only depend on $R_0$.\par 
When $\sqrt{\varrho}< r \leqslant 1/3qR_0$, we deduce from \eqref{dyadLam} that $T:=\big|\Lambda\cap[1/3rq,2/3rq]\big|\geqslant 4$ for fixed $R_0$ and sufficiently large $n$. It follows that
\begin{equation} 
\label{eqsgpcmin2}
 \frac{|F(\varrho + it)|}{F(\varrho)} \leqslant \Big(1+ \frac{5}{4\varrho}\Big)^{-T/2}\leqslant \varrho^{2}.
\end{equation}
\par
This completes the treatment of the so-called minor arcs.\par 
We next consider the contribution of the ``major arcs'', corresponding to the case $$0\leqslant r \leqslant 2\varrho/3q.$$ Let $m_{q}:=\min\big\{\Lambda\smallsetminus q\N\big\}$ ($2 \leqslant q \leqslant 3R_0$). Observe that $\varrho \leqslant 1/m_{q}$ for large $n$, and so $\|m_{q}t/2\pi\| \geqslant 1/3q$ for all $t\in J$ belonging to the major arcs (whose Dirichlet approximation is a fraction with denominator $q$), hence bounding the products as before yields 
\begin{equation} 
\label{eqsgpcmaj}
 \frac{|F(\varrho + it)|}{F(\varrho)} \leqslant \bigg( 1 + \frac{5\|m_{q}t/2\pi\|^{2}}{m_{q}^{2}\varrho^{2}}\bigg)^{-1/2}\leqslant  \bigg( 1 + \frac{5}{9 q^{2}m_{q}^{2}\varrho^{2}}\bigg)^{-1/2} \leqslant C_{q} \varrho.
\end{equation}
However the overall measure of the major arcs is 
$$\sum_{2\leqslant q\leqslant 3R_0}2\varphi(q)\varrho/3q \ll \varrho,$$
where the implied constant only depends on $R_0$.
Therefore we infer from \eqref{eqsgpcmaj} that the contribution of the major arcs to the integral \eqref{eqsgpcint} is $\ll\varrho^2F(\varrho)$, thereby completing the proof of our first goal.
\par
Hardly anything changes when $\Lambda\smallsetminus q\mathbb{N}$ is infinite for all natural $q \geqslant 2$. Indeed, on the one hand, we may take $T$ as large as we wish in  \eqref{eqsgpcmin2} and, on the other hand, this extra assumption implies that, instead of a single element $m_q$, we may select $N$ ones from $\Lambda\smallsetminus q\N$ when evaluating the contribution of the major arcs, so that the  upper bound corresponding to  \eqref{eqsgpcmaj} becomes $\ll_N\varrho^N$. 
\par 
It remains to bound the contribution to \eqref{eqsgpcint} coming from the range $\varrho^{\beta} \leqslant t\leqslant 2\pi \varrho$. To this end, we can employ a similar but substantially simpler argument. Indeed, we now have $\|mt/2\pi\|=mt/2\pi$ for $m\leqslant 1/2\varrho$. Thus, with $T:=\big|\Lambda\cap[1,1/2\varrho]\big|\gg 1/\varrho^\alpha$ in view of \eqref{eqsgpcsize}, 
\begin{align*}
 \frac{|F(\varrho + it)|}{F(\varrho)} & \leqslant \prod_{\substack{m \leqslant 1/2\varrho \\ m \in \Lambda}} \bigg(1 + \frac{9\big\|mt/2\pi\big\|^{2}}{m^{2}\varrho^{2}}\bigg)^{-1/2} \leqslant  \Big(1 + \tfrac15\varrho^{2\beta - 2}\Big)^{-T/2}\\&\ll\e^{-c_2\varrho^{2\beta-2-\alpha}} \ll_N \varrho^N.
\end{align*}
This completes the proof of \eqref{eqsgpcint}.
\end{proof}
\break
\section{The contribution of the saddle point:\goodbreak completion of the proof of Theorem \ref{thsgpimain}}
By the results of Section \ref{ssgpc} and \eqref{eqsgpgcauchy}, it only remains to estimate 
\begin{equation*}
 I := \int_{|t| \leqslant \varrho^{\beta}} \e^{\Phi(\varrho + it) + itn}\mathrm{d}t,
\end{equation*}
for some $\beta < 1 + \alpha /2$. We shall expand $\Phi(\varrho + it)$ as a Taylor series at $\varrho$. The estimation of the error terms requires that $\beta > 1 + \alpha/3$, so that $|t^{k}\Phi^{(k)}(\varrho)| \ll 1$ for $k \geqslant 3$ and $\ll \varrho^{N}$ for $k \geqslant 3(N+\alpha)/\alpha$, as implied by \eqref{eqsgpgderw}. 
Write 
$$E(t):=\sum_{ 3\leqslant k\leqslant3(N + \alpha)/\alpha}\frac{(it)^{k} \Phi^{(k)}(\varrho)}{k!}.$$
Since $\Phi'(\varrho)+n=0$ by construction,  we obtain, for $H>N/(3\beta-3-\alpha)$,
\begin{align*}
 I & = F(\varrho) \int_{|t| \leqslant \varrho^{\beta}}\e^{-t^2\Phi''(\varrho)/2+E(t) + O(\varrho^{N})}\mathrm{d}t\\
& = F(\varrho) \int_{|t| \leqslant \varrho^{\beta}}e^{-t^2\Phi''(\varrho)/2}\bigg\{1 + \sum_{1\leqslant h\leqslant H}\frac{E(t)^{h}}{h!} + O\big(\varrho^{N}\big)\bigg\}\mathrm{d}t\\
& = F(\varrho) \int_{|t| \leqslant \varrho^{\beta}} e^{-t^2\Phi''(\varrho)/2} \bigg\{1 + \sum_{3\leqslant k\leqslant 3H(N+\alpha)/\alpha}\lambda_{k}(\varrho)t^{k} + O\big(\varrho^{N})\bigg\}\mathrm{d}t,
\end{align*}
where 
\begin{equation} 
\label{eqsgpslambda}
  \lambda_{k}(\varrho) = i^{k} \sum_{1\leqslant h\leqslant H} \frac{1}{h!} \sum_{\substack{3 \leqslant m_{1},\dots,m_{h} \leqslant 3(N + \alpha)/\alpha \\ m_{1} + \dots + m_{h} = m}} \prod_{1 \leqslant j \leqslant h} \frac{\Phi^{(m_{j})}(\varrho)}{m_{j}!}.
\end{equation}
Since the integration range is symmetrical around the origin, only even $k$ contribute. As
\begin{equation*}
 \int_{|t| \leqslant \varrho^{\beta}} e^{-t^2\Phi''(\varrho)/2}t^{2k}\mathrm{d}t = \frac{\sqrt{2\pi}(2k)!}{2^{k}k! \Phi''(\varrho)^{k+1/2}} + O\Big(\e^{-c\varrho^{2\beta}/\Phi''(\varrho)}\Big), 
\end{equation*}
and the above error term is admissible in view of the assumption to $\beta < 1+\alpha/2$, we find
\begin{equation} \label{eqsgpsmaini}
 I = \sqrt{2\pi}F(\varrho)\Bigg\{\frac1{\sqrt{\Phi''(\varrho)}} + \sum_{2\leqslant k\leqslant 3H(N + \alpha)/2\alpha} \frac{(2k)!\lambda_{2k}(\varrho)}{2^{k}k! \Phi''(\varrho)^{k+1/2}} + O\big(\varrho^{N}\big)\Bigg\}.
\end{equation}
It is easily checked that the terms in the sum are of lower order than the main term. Expressing $\varrho$, $\lambda_{2k}(\varrho)$, and $\Phi^{(j)}(\varrho)$  in terms of $n$ via  formulas \eqref{eqsgpslambda}, \eqref{eqsgpgderw}---resp. \eqref{eqsgpgders}---, and \eqref{eqsgpginvw}---resp. \eqref{eqsgpginvs}---, carrying \eqref{eqsgpsmaini} back into \eqref{eqsgpgcauchy} and employing Lemma \ref{lemsgpcdio}, concludes the proof of \eqref{eqsgpirweak}---resp. \eqref{eqsgpirstrong}.

\section{Examples}

In this final section, we illustrate the  applicability of Theorem \ref{thsgpimain} by describing several examples, presented in increasing complexity and generality. Example \ref{expol} generalizes all  previous ones.

\subsection{The classical partitions} In this case $\Lambda = \N^*$. Then $L(z) = \zeta(z)$ and all hypotheses of Theorem \ref{thsgpimain} are clearly satisfied. Since $\zeta(2) = \pi^{2}/6$, $\zeta(0) = -1/2$, $\zeta'(0) = -\log(2\pi)/2$, and $\zeta(-1) = -1/12$, we obtain, with the usual convention of the theory of asymptotic series,
\begin{equation}
 p(n) \sim \frac{\e^{\pi\sqrt{2n/3}}}{4n\sqrt{3}}\bigg\{1 + \sum_{k\geqslant 1} \frac{c_k}{n^{k/2}}\bigg\},
\end{equation}
where $c_{1} = \gamma_{1,0} + \gamma_{0,1} = -\sqrt{2/3}\big(\pi/48 + 3/2\pi\big)$.

\subsection{$\pmb{k}$-th powers} Let $k$ be a natural number and let $\Lambda = \{n^{k}: n \in \N^*\}$. The problem of determining asymptotics for partitions in $k$-th powers has a long history. In 1918, Hardy and Ramanujan \cite{h-r} provided the corresponding formula \eqref{eqsgpirweak} without proof. In 1934, introducing several complicated objects, Wright \cite{wright} obtained an asymptotic expansion for the relevant $p(n)$, noted as $p_k(n)$. More recently, appealing to the Hardy-Littlewood circle method, Vaughan \cite{vaughan} obtained an asymptotic expansion when $k = 2$ and Gafni \cite{gafni} generalized his argument to arbitrary, fixed $k$. Finally, in collaboration with Wu and Li, Tenenbaum obtained \eqref{eqsgpekth} with the saddle-point method \cite{t-w-l}. We refer to the introduction of \cite{t-w-l} for a more detailed account on the history of this problem.\par
As $L(z) = \zeta(kz)$, all hypotheses of Theorem \ref{thsgpimain} are fulfilled with $A = \alpha = 1/k$. We find
\begin{equation} \label{eqsgpekth}
 p_{k}(n) \sim \frac{\mathfrak{b}_{k}\e^{\mathfrak{c}_{k} n^{1/(k + 1)}}}{n^{(3k+1)/(2k+2)}} \bigg\{1 +  \sum_{h\geqslant 1} \frac{c_{k,h}}{ n^{h/(k + 1)}}\bigg\},
\end{equation} 
where
\begin{align*}
 \mathfrak{a}_{k} & = (k^{-1}\Gamma(1+k^{-1})\zeta(1+k^{-1}))^{k/(k + 1)},\\
 \mathfrak{b}_{k} & =\frac{\mathfrak{a}_{k}}{\sqrt{(2\pi)^{k+1}(1+1/k)}},\\
 \mathfrak{c}_{k} & = (k+1)\mathfrak{a}_{k}.
\end{align*}
Furthermore, when $k \geqslant 2$, then $c_{k,1} = \gamma_{1,0} = -(11k^{2} + 11k + 2)/24k\mathfrak{c}_{k}$.


\subsection{$\pmb{k}$-th powers of arithmetic progressions} We set here $\Lambda := \{(qn + a)^{k} : n \in \mathbb{N}\}$, where $k \geqslant 1$ is natural and $(a,q) = 1$. The problem of finding asymptotics for its partitions, say $p_{a,q,k}(n)$, appears to have been first studied by Berndt, Malik and Zaharescu \cite{b-m-z} by means of the circle method. We have $$L(z) = \sum_{n \geqslant  0} (qn + a)^{-kz} = q^{-kz}\zeta(kz,a/q),$$ 
where $\zeta(kz,a/q)$ denotes the Hurwitz zeta function. It is a classical fact\footnote{See, e.g., \cite[ex. 186]{GT15}.}  that $\zeta(z,t)$ admits for all $t\in(0,1)$ an analytic extension (with the required bounds to apply Theorem \ref{thsgpimain}) to the whole complex plane except for a simple pole at $1$ with residue $1$. The following specific values for the Hurwitz zeta function are also well-known:
 $$\zeta(0,a/q) = \tfrac12 - \frac aq, \ \zeta'(0,a/q) = -\tfrac12\log(2\pi)  + \log \Gamma(a/q), \ \zeta(-1,a/q) = -\tfrac1{12} +\frac a{2q} - \frac{a^2}{2q^2}\cdot$$ Finally,  since $(a,q) = 1$, the arithmetic conditions (a) and (g) are also fulfilled. Therefore, $\Lambda\in\eD(1/qk)$ and we may state that
\begin{equation}
 p_{a,q,k}(n) \sim  \frac{\mathfrak{b}_{a,q,k}\e^{\mathfrak{c}_{a,q,k}n^{1/(k+1)}}}{n^{(qk+2ak+q)/(2q(k+1))}}\bigg\{1 + \sum_{h\geqslant 1} \frac{c_{a,q,k,h}} {n^{h/(k+1)}}\bigg\},
\end{equation}
where
\begin{align*}
 \mathfrak{a}_{a,q,k} & = \Big\{k^{-1}q^{-1}\Gamma(1+k^{-1})\zeta(1+k^{-1})\Big\}^{k/(k + 1)},\\
 \mathfrak{b}_{a,q,k} & =\frac{\mathfrak{a}_{a,q,k}^{a/q}\Gamma(a/q)^{k}q^{ak/q}}{\sqrt{(2\pi)^{k+1}q^{k}(1+1/k)}},\quad
 \mathfrak{c}_{a,q,k}  = (k+1)\mathfrak{a}_{a,q,k}.
\end{align*}
Furthermore, when $k \geqslant 2$, then $$c_{a,q,k,1} = \gamma_{1,0} = -\frac{1}{24k\mathfrak{c}_{a,q,k}}\left\{\left(\frac{12a^{2}}{q^{2}} - 1\right)k^{2} + \left(\frac{12a}{q} - 1\right)k + 2\right\}.$$ When $k = 1$, we obtain 
\begin{equation}
 p_{a,q,1}(n) \sim \frac{\Gamma\left(a/q\right) \pi^{a/q - 1} q^{a/2q - 1/2}\e^{\pi\sqrt{2n/3q}}}{2^{3/2 + a/2q} \cdot 3 ^{a/2q} \cdot n^{a/2q + 1/2}}\bigg\{1 + \sum_{h\geqslant 1} \frac{c_{a,q,1,h}}{n^{h/2}} \bigg\},
\end{equation}
where $$c_{a,q,1,1} = \gamma_{1,0} + \gamma_{0,1} = -\sqrt{\frac{3q}{2}}\left\{\frac{a}{2\pi q}\left(1+\frac{a}{q}\right) + \frac{\pi}{3q}\left(\tfrac{1}{24} - \frac{a}{4q} + \frac{a^{2}}{4q^{2}}\right)\right\}.$$

\subsection{Polynomials} \label{expol} The previous examples may be generalized even further. Let $f$ be a polynomial with integer coefficients\footnote{In order to maintain the analogy with arithmetic progressions, we let $f(0)$ also represent a part. This also simplifies the notation in our results. The polynomials representing the previous examples are then $n + 1$, $(n+1)^{k}$, $qn + a$ and $(qn + a)^k$ respectively.} such that $f(\mathbb{N}) \subseteq \mathbb{N}^*$. For convenience, we impose the restriction that $f$ is injective on $\mathbb{N}$: this avoids different $n$ representing the same part\footnote{Even for  non-injective polynomials, our formulas would be valid if one considers parts coming from different $n$ to be different. These would then be colored partitions. For example, for $f(n) = n(n-2) + 2$, the part $2$ arises with multiplicity $2$ due to $f(0) = f(2) = 2$.}. The question of obtaining asymptotics for the partition function, say $p_f(n)$, associated to $\Lambda = \{f(n):n \in \mathbb{N}\}$ when $f$ has non-negative coefficients was raised at the end of the paper \cite{b-m-z}. Elaborating on ideas from the circle method, Dunn and Robles \cite{d-r} established an asymptotic formula for $p_{f}(n)$ under certain fairly restrictive hypotheses on $f$. With our notation, they  require, for example when $f$ is a polynomial of degree $k$, that the coefficient of $n^{k-1}$ in the polynomial $f(n-1)$ should vanish. This has as consequence that their results are not applicable for $k$-th powers of arithmetic progressions unless $a = q = 1$. They also did not explicitly compute the constants $\mathfrak{b}_{f}$ and $\mathfrak{c}_{f}$ of the formula \eqref{eqsgpep} below, but mentioned they are effectively computable. The proof presented here, based on the saddle-point method, appears substantially simpler and provides more general results.\par
Let $f(n) = a_{0}n^{k} + a_{1}n^{k-1} + \dots + a_{k}$. We first analyze the corresponding function $L$. Let $M$ be  sufficiently large. For $\operatorname*{Re} z > 1/k$, we have
\begin{equation}
\label{Lz}
\begin{aligned}
 L(z)  &= \sum_{n \geqslant  0} f(n)^{-z} = a_{k}^{-z} + \sum_{1\leqslant n\leqslant M} f(n)^{-z} +  a_{0}^{-z}\sum_{n > M} n^{-kz}\big\{1+G(n)\big\}^{-z}\\
 &=a_{k}^{-z} + \sum_{1\leqslant n\leqslant M} f(n)^{-z} +  a_{0}^{-z}\sum_{n>M}n^{-kz}\sum_{h\geqslant 0}\binom{-z}hG(n)^h,
 \end{aligned}
 \end{equation}
 where $G(n):=\sum_{1\leqslant j\leqslant k}a_j/(a_0n^j)$. Expanding $G(n)^h$ in the inner sum by the multinomial formula and inverting summations we see that the last double sum may be written as an infinite, convergent, linear combination of terms $\zeta(kz+j)$ where $j$ is integer.\par 
 This provides a meromorphic extension of $L$ to the complex plane. We observe that $L$ has a simple pole at $z = 1/k$ with residue $A:=1/ka_{0}^{1/k}$ and potentially admits simple poles at $z = -r/k$ for natural $r$, but not at negative integers. It is also readily seen that $L$ has at most polynomial growth on any right half-plane from which a neighborhood of the poles has been removed. Thus $\Lambda\in\eC(A)$ and \eqref{eqsgpirweak} holds. However $\Lambda$ does not necessarily belong to $\eD(A)$ and we cannot get \eqref{eqsgpirstrong} immediately. \par 
Regarding the analytical nature of $L(z)$, we observe that the possible presence of additional simple poles does not conceptually alter the strategy employed to prove Theorem \ref{thsgpimain}, and the argument given in previous sections may be adapted to obtain the required asymptotic expansion. The only difference is that these poles give rise to additional terms in \eqref{eqsgpgders} that have to be taken into account. Note that the possible poles of $L$ do not interfere with those of $\Gamma$ at integers. Calculations show that these potential extra poles do not alter the expansion \eqref{eqsgpirstrong}, although they might affect the values $\gamma_{j,h}$.
 \par
We conclude the analysis of the function $L$ by calculating $L(0)$ and $L'(0)$. Employing the formula for the analytic continuation of $L$ above, we find
\begin{equation*}
 L(0) = 1 + M + \zeta(0) - M - \lim_{z \rightarrow 0} \frac{a_{1}}{a_0}z \zeta(kz+1) = \tfrac{1}{2} - \frac{a_{1}}{a_{0}k}.
\end{equation*}
$L'(0)$ can be computed in a similar but slightly more complicated way. Starting from the expression $L'(z) = -\sum_{n\geqslant  0} \log(f(n))/f(n)^z$, the meromorphic extension of $L'(z)$ may be defined by a formula analogous to \eqref{Lz} above. Specializing to $z = 0$ and inserting when needed the formula
$$\sum_{n\geqslant 1} \Big\{\log\Big(1-\frac\alpha n\Big) + \frac\alpha n\Big\} = -\log \Gamma(-\alpha) +\gamma \alpha - \log(-\alpha),$$ where $\gamma$ is the Euler-Mascheroni constant and $\alpha \in \mathbb{C}\smallsetminus (-\mathbb{N}),$ we obtain\footnote{Strictly speaking $\mathrm{mod} \: 2\pi i$, since selecting any particular branch of the logarithm turns out to be irrelevant.}
\begin{equation*}
 L'(0) = -\left(\frac{1}{2} - \frac{a_{1}}{a_{0}k}\right) \log a_{0} - \tfrac12k \log(2\pi) + \sum_{1\leqslant j\leqslant k} \log \Gamma(-\alpha_{j}),
\end{equation*}
where $\alpha_{j}$ are the zeros of $f$. Observe that $\alpha_{j}$ cannot be natural or zero: otherwise $f(\alpha_{j})$ would furnish the part $0$, in contradiction with the assumption $f(\mathbb{N}) \subseteq \mathbb{N}^*$.\par 
It remains to address the question of the validity of condition (g) in the definition of $\eD(A)$. It translates to the existence, for each prime $p$, of integers $n$  such that $f(n) \not\equiv 0$ $(\mathrm{mod} \: p)$. In other words, for each prime factor $p$ of $a_{k}$, the polynomial $f$ reduced $\mathrm{mod} \: p$ should  not divide $n^{p-1} - 1$ (in $\mathbb{F}_{p}$). Note that $a_{k} = f(0) \neq 0$. This restriction excludes for example  the polynomial $n^{2} + n + 2$ corresponding to partitions all parts of which are even. For this polynomial however, one may still use our analysis to find the asymptotics for even $n$, that is $p_{f}(2n) = p_{f/2}(n)$. The arithmetic condition for $f/2$ would then be fulfilled: extending the analysis of $L$ to a polynomial with coefficients in $\Z/2$ does not introduce significant difficulties.
\par
In conclusion, for each $f \in \mathbb{Z}[x]$, for which $f(\mathbb{N}) \subseteq \mathbb{N}^*$, $f$ is injective on $\mathbb{N}$ and such that $f$ does not vanish identically $\mathrm{mod} \: p$ for any prime $p$,  we have 
\begin{equation} \label{eqsgpep}
 p_{f}(n) \sim \frac{\mathfrak{b_{f}} \e^{\mathfrak{c_{f}}n^{1/(k+1)}}}{n^{(k + 1 + 2a_{1}/a_{0})/(2k+2)}}\bigg\{ 1 + \sum_{h\geqslant 1} \frac{c_{f,h}}{ n^{h/(k+1)}}\bigg\},
\end{equation}
where 
\begin{align*}
 \mathfrak{a}_{f} & = \big\{k^{-1}a_{0}^{-1/k}\Gamma(1+k^{-1})\zeta(1+k^{-1})\big\}^{k/(k + 1)},\\
 \mathfrak{b}_{f} & =\frac{\mathfrak{a}_{f}^{a_{1}/a_{0}k}  a_{0}^{-1/2 + a_{1}/a_{0}k} \prod_{j = 1}^{k}\Gamma(-\alpha_{j})}{(2\pi)^{(k+1)/2}\sqrt{1+1/k}},\quad
 \mathfrak{c}_{f}  = (k+1)\mathfrak{a}_{f},
\end{align*}
for a polynomial $f$ of degree $k$, where $a_{0}$ is the coefficient of the dominant term, $a_{1}$ that of $n^{k-1}$ and the $\alpha_{j}$ are the zeros of $f$.
\subsection{Combinations} Theorem \ref{thsgpimain} may also be applied for combinations of the previous instances. For example, let $\Lambda := (a+q\N) \cup (b+r\N)$ and  define $c$ by $(a+q\N) \cap (b+r\N)=c+d\N$ with $d:=\lcm(q,r)$. If this intersection is empty, e.g. when $q=r$ and $a \neq b$, then the function representing the intersection may be omitted in the formula for $L_\Lambda$. Then $L_\Lambda(z)$  becomes $L_{a,q}(z) + L_{b,r}(z) - L_{c,d}(z)$, where $L_{u,v}$ denotes the  $L$-function  corresponding to the arithmetic progression $u+v\N$. The new $L$-function inherits the relevant properties from the old ones. Condition (a) becomes $(a,b,q,r) = 1$. Thus Theorem \ref{thsgpimain} is applicable.\par
Another example is provided by $\Lambda := k\mathbb{N}^{*} \cup \{a\}$, with $k \geqslant 2$. Then condition (a) is equivalent to $(a,k) = 1$, while condition (g) is not fulfilled. Letting $p_{k,a}(n)$ denote the present partition function, we have 
\begin{equation}
 p_{k,a}(n) \sim \frac{\sqrt{k} \e^{\pi \sqrt{2n/3k}}}{2a \pi \sqrt{2n}}\cdot 
\end{equation} 
One may also devise combinations that result in an $L$-function having several poles on the positive real axis. For example, when $\Lambda$ consists of all $k$ and $h$-th powers, the resulting $L$-function can have poles at $1/k$, $1/h$ and $1/\lcm(k,h)$. Although a direct application of Theorem \ref{thsgpimain} is no longer possible, one may still use its proof. As previously mentioned, these poles then have implications for the function $\Phi$ in \eqref{eqsgpgderw} and \eqref{eqsgpgders} and one has to take the extra terms into account in the calculations for the rest of the proof. As the poles now lie at the right of $0$, the extra poles will now also influence the main term.

\end{document}